\newcommand{\enquote}[1]{``#1''}
\newtheorem{lemma}{Lemma}
\let\bid@start\@empty
\let\bid@end\@empty
\def\MR@url{http://www.ams.org/mathscinet-getitem?mr=}
\def\MR#1{\href{\MR@url#1}{MR#1}}
\def\BDOI#1{%
\edef\doi@base@i{\doi@base}\def\doi@base{}%
doi:~\doiurl{\doi@base@i#1}}
\appto\bid@start{\def\doi@size{\ttfamily}}
\appto\bid@end{\unskip.}
\def\bid#1{%
       \bgroup
       \bid@start
       \let\bid@output\@empty
       \setkeys{bid}{#1}\ignorespaces%
       \ifdefvoid\bid@mr{}{\appto\bid@output{\bid@mr}}%
       \ifdefvoid\bid@doi{}{
         \ifdefempty\bid@output{}{\appto\bid@output{. }}
         \appto\bid@output{\bid@doi}%
       }%
       \bid@output
       \bid@end
       \egroup
}
\begin{document}

\begin{frontmatter}
\title{Comment on Article by Berger, Bernardo, and~Sun\thanksref{T1}}
\runtitle{Comment on Article by Berger, Bernardo, and Sun}

\relateddois{T1}{Main article DOI: \relateddoi[ms=BA915,title={James O. Berger, Jose M. Bernardo, and Dongchu Sun. Overall Objective Priors}]{Related item:}{10.1214/14-BA915}.}

\begin{aug}
\author[a]{\fnms{Gauri Sankar} \snm{Datta}\ead[label=e1]{gaurisdatta@gmail.com}}
\and
\author[b]{\fnms{Brunero} \snm{Liseo}\corref{}\ead[label=e2]{brunero.liseo@uniroma1.it}}

\runauthor{G. S. Datta and B. Liseo}

\address[a]{University of Georgia, Athens, GA (USA), \printead{e1}}
\address[b]{Sapienza Universit\`a di Roma, Italy, \printead{e2}}

\end{aug}

\end{frontmatter}


It is our distinct pleasure to comment on a very thought provoking
paper, and we
first congratulate the Authors for this new masterly contribution in
the field
of objective priors.

The main goal of the paper is to find a multi-purpose objective prior
for a model
that should be used by different researchers with varying goals,
with the consequence that no single parameter or parametric function
can be identified as a parameter of interest.
In this situation, the most popular approaches either fail or, as in
the case of
the reference prior algorithm, they cannot be used.

Three general methods are discussed by the Authors. The first one is
limited to a number of
particular situations where the reference prior is the same for all
quantities of interest:
this case is not of much concern since a natural solution exists.
The second method is based on the reference prior approach: one looks
for the
prior which produces the marginal posteriors for
the quantities of interest which are closer -- in some sense --
to the marginal reference posteriors. Whereas this method
is perfectly reasonable, the final result will depend on the particular set
of the quantities of interest considered and it cannot be considered
as the ``overall'' objective prior.
The third method is based on a hierarchical representation of the
model, when it is available.
It shifts the problem of determining an objective prior
to an upper level of the hierarchy, where the impact of the prior might be
less serious.

We believe that the latter method is superior to the others because
\begin{itemize}
\item it is compatible with a predictive approach where
all the parameters are nuisance parameters and there is no particular
quantity of interest;
however, one should be careful here: if the quantity of interest is,
for example, the posterior
predictive mean
\[
E(X_{n+1}\mid X_1, \dots, X_n)
\]
of a future observation 
-- and not the entire predictive density -- then a parameter of
interest actually does exist!
\item it is clearly superior to Method 2, especially when the model is
used repeatedly
by different people which are interested in different sets of parameters.
\end{itemize}

In terms of prediction, it would be worth discussing
the proposal of \cite{Datta00}.

In this contribution, we will briefly consider the multinomial example,
and provide
some comments on the concept of prior averaging.

\section{The multinomial model in the sparse case}

This is a very interesting problem. Jeffreys' prior allocates a weight
of $1/2$ to each original
component of the vector $(\theta_1, \theta_2, \dots, \theta_m)$. This
is too much when
$m$ is large compared to the sample size $n$ and the distribution is
very sparse.
This suggests that the prior mass should be adequately spread
on the parameter space in such a way that each cell has a negligible
prior mean, especially when compared with the weight of the data.

In the multinomial case, the prior weight (expressed as the sum of the
hyper-parameters of the Dirichlet prior)
is equal to $m/2$ for the Jeffreys' prior, while in the hierarchical
approach, arising from a
Dirichlet$(a,a,\dots, a)$ hyper-prior,
it is a random quantity $v=ma$ with density given by expression (25) of
the paper,
at least in the case of an infinite $m$.
Several numerical computations, with different values of $n$ and $r_0$
(i.e.,~the number of non-empty cells),
show that the mode and the median of $v$ are rarely larger than 2,
so the hierarchical approach automatically accounts for the sparsity
and the
corresponding marginal posteriors are dramatically different from those
arising from the use of Jeffreys' prior.

There are many ways in which this problem can be handled.
If we transform it to a multiple testing problem, that is, for each
cell $i$ we test
\[
H_0 : \theta_i = 0 \qquad\mbox{ vs. } \qquad H_1: \theta_i \not= 0,
\]
the problem can be rephrased as that of finding an
ad-hoc prior, just like in the sparse normal problem, which
is well studied in literature, see, for example, \cite{scott}.
The two problems are similar but not identical: here we do not necessarily
observe data for each cell, and the difficulties associated with this
discrete version of the
problem are even greater
since the values of the $\theta_i$'s will affect the standard deviation
of the cells,
not only the means.

From a testing perspective there is also another interesting connection:
the Authors propose to add -- as a prior weight -- something close to
$1/m$ to each cell.
So the total weight of the prior will be approximately one. This
reminds us of the unit prior information of \cite{kw}.

The sparse multinomial case is also of theoretical interest because it
represents a
bridge between parametric and non-parametric models, when the number of
cells goes to infinity.

Our personal view of the example is close to that of the Authors,
although it is not of
great surprise that the Jeffreys' prior does not clearly discriminate
between observed and non-observed cells,
when $n$ is so small compared to $m$.
In other words, this is too much to ask of the prior.
When $n$ is as small as 3, and the number of parameters is about 1000,
it is hopeless to find a good automatic objective
prior and some external guidance (in this case, the choice of a
``proper'' prior
within the Dirichlet class) seems unavoidable.

More interesting is the fact that the hierarchical prior depends on $m$
and $n$ only through
their ratio: this is actually what one would expect.

We have also considered a variant of the multinomial example. In
particular, we have considered the case when the multinomial likelihood
can be rephrased as one arising from a sample of $m$ independent
Poisson random variables with mean vector ($\psi_1, \dots, \psi_m)$ and
then setting $\theta_j= \psi_j/\sum_i \psi_i$. Doing the usual
reference prior calculations here, we ended up with the same
conclusions as if we have used the standard Jeffreys' Beta prior
$(1/2,1/2)$ for the $\theta_i$'s. We wonder how to get the same result
(weights $\approx m^{-1}$ for the cells) in this alternative
perspective. It is very likely that this can be obtained by assuming
independent gamma priors with shape parameter $a$ and scale parameter
$\beta$ for the $\psi_j$. If the ``nuisance'' scale parameter $\beta$
is eliminated by conditioning on the total counts, we end up with the
same conclusion. However, the rationale behind this last choice is --
again -- only pragmatic.

A related issue is the ordered multinomial example in \xch{Section }{\S}2.1.2.
Here the overall prior for any of the parameters $(\xi_1, \dots, \xi
_m)$ is the product of independent Beta$(1/2, 1/2)$:
what happens for large $m$? Is the overall prior still a sensible prior
or should we take into account this problem?

\section{A comment on geometric average of priors}
Consider the following divergence function
\[
d(\eta) = \sum_{i=1}^m \alpha_i \int\eta(\theta) \log\frac{\eta(\theta
)}{\pi_i(\theta)} d\theta,
\]
where $\alpha_1, \dots, \alpha_m \geq0$ are suitable constants adding
to 1, and $\pi_i(\theta)$ may be a suitable objective prior when one is
interested in one of a given set of $m$ parametric functions.
The above function is a weighted average Kullback--Leibler divergence
between a global prior and the marginal priors we would like to use in
the case we were interested in a single parametric function $t_i(\theta
)$, $i=1, \dots, m.$
Note that
\begin{align*}
d(\eta) &= \int\eta(\theta) \log\eta(\theta) d\theta- \sum_{i=1}^m
\int\eta(\theta)\log
\pi^{\alpha_i}_i(\theta) d\theta\\
&= \int\eta(\theta) \log\frac{\eta(\theta)}{\prod_{i=1}^m \pi^{\alpha
_i}_i(\theta)}d\theta.
\end{align*}
By Jensen's inequality, $d(\eta)$ will be minimized with respect to
$\eta$ if
$\eta(\theta)/\prod_{i=1}^m \pi^{\alpha_i}_i(\theta)$ is a degenerate function.
This leads to the geometric mean prior
\[
\pi_G(\theta)\propto\prod_{i=1}^m \pi^{\alpha_i}_i(\theta).
\]

Usually, the component priors $\pi_i(\theta)$'s are improper, which in
turn may also make
$\pi_G(\theta)$ an improper prior. The authors indicated that the
geometric mean prior is preferable to the arithmetic mean prior since
one or more of the component priors $\pi_i$ may be improper, and the
arithmetic mean posterior may be highly influenced by one or a few
component posteriors. Indeed, for any arbitrary positive constant
$c_i$, $c_i\pi_i(\theta)$ is as much an objective prior as $\pi_i(\theta
)$ is. While the posterior propriety of the arithmetic mean prior is an
immediate consequence of the propriety of the component posteriors, the
same is not so obvious for the geometric mean prior.
However, the following lemma shows that the posterior corresponding to
$\pi_G(\theta)$ will be proper provided that each component prior $\pi
_i(\theta)$ generates a proper posterior.

\begin{lemma}
For two prior densities $\mu(\theta)$ and $\nu(\theta)$, if
\[
\int\mu(\theta) L(\theta; \mathbf{x}) d\theta< \infty, \quad \makebox
{ and }
\quad \int\nu(\theta) L(\theta; \mathbf{x}) d\theta< \infty,
\]
%
then, for any $\alpha\in(0,1),$
\[
\int\mu^\alpha(\theta) \nu^{1-\alpha}(\theta) L(\theta; \mathbf{x})
d\theta< \infty,
\]
where $L(\theta; \mathbf{x})$ denotes the joint density of data $\mathbf
{x}$ corresponding to the parameter value~$\theta$.
\end{lemma}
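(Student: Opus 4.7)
My plan is to invoke H\"older's inequality with conjugate exponents tailored to the convex combination appearing in the integrand. Specifically, I would set $p = 1/\alpha$ and $q = 1/(1-\alpha)$, which satisfy $1/p + 1/q = 1$ since $\alpha \in (0,1)$.

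The key algebraic observation is that the likelihood $L(\theta;\mathbf{x})$ can be split as $L = L^\alpha \cdot L^{1-\alpha}$, which allows the integrand to be rewritten as a product of two pieces matched to the H\"older exponents:
\[
\mu^\alpha(\theta)\, \nu^{1-\alpha}(\theta)\, L(\theta;\mathbf{x}) = \bigl[\mu(\theta) L(\theta;\mathbf{x})\bigr]^\alpha \bigl[\nu(\theta) L(\theta;\mathbf{x})\bigr]^{1-\alpha}.
\]
Applying H\"older's inequality with the exponents above then yields
\[
\int \mu^\alpha(\theta)\, \nu^{1-\alpha}(\theta)\, L(\theta;\mathbf{x})\, d\theta \leq \left(\int \mu(\theta) L(\theta;\mathbf{x})\, d\theta\right)^{\!\alpha} \left(\int \nu(\theta) L(\theta;\mathbf{x})\, d\theta\right)^{\!1-\alpha},
\]
and both factors on the right are finite by hypothesis.

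There is essentially no serious obstacle here: both $\mu$, $\nu$, and $L$ are nonnegative, so measurability and the standard form of H\"older's inequality apply directly without sign issues or need to worry about cancellation. The only point of care is verifying that the two exponents $1/\alpha$ and $1/(1-\alpha)$ are legitimate conjugates (which holds precisely because $\alpha \in (0,1)$, ruling out the degenerate endpoints where one of the exponents would blow up). The same argument extends immediately by induction to the geometric mean $\pi_G \propto \prod_{i=1}^m \pi_i^{\alpha_i}$ with $\sum \alpha_i = 1$, giving propriety of the geometric mean posterior whenever each component posterior is proper -- which is the result the authors actually want for their motivating discussion.
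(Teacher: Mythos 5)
Your proposal is correct and matches the paper's proof exactly: the same splitting $\mu^\alpha\nu^{1-\alpha}L = [\mu L]^\alpha[\nu L]^{1-\alpha}$ followed by H\"older's inequality with conjugate exponents $1/\alpha$ and $1/(1-\alpha)$. The remark about extending by induction to the $m$-fold geometric mean is also precisely what the paper does immediately after the lemma.
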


\begin{proof}
By H\"{o}lder's inequality, it follows that
\begin{align*}
\int\mu^\alpha(\theta) \nu^{1-\alpha}(\theta) L(\theta; \mathbf{x})
d\theta
&=
\int\left[\mu(\theta)L(\theta; \mathbf{x})\right]^\alpha
\left[\nu(\theta)L(\theta; \mathbf{x})\right]^{1-\alpha}
d\theta\\
&\leq
\left[ \int\mu(\theta) L(\theta; \mathbf{x}) d\theta\right]^\alpha
\left[ \int\nu(\theta) L(\theta; \mathbf{x}) d\theta\right
]^{1-\alpha} .
\end{align*}
Thus $\mu(\theta)^\alpha\nu(\theta)^{1-\alpha}$ generates a proper
posterior density for the given data $\mathbf{x}$.
\end{proof}
By repeated use of this lemma, the propriety of the posterior based on
the geometric prior $\pi_G(\theta)$
easily follows.

\section{An anecdote}
While preparing the present comments one of the authors attended a seminar
on applied probability where the following situation was presented.
In a small village, there is a chief and several shepherds. Each
shepherd runs
a flock of sheeps.
The chief knows that the ground of their village is going to become
parched so
the shepherds have to move away.
All the roads starting from the village -- but one -- are full of
hungry wolves.
The chief has his own probability distribution about which is the safe road.
If the chief communicates his/her information to the shepherds, it is
very likely
that all of them would choose the same road. This implies that either
all the sheeps or none will survive. If the chief does not communicate
his/her information, it is likely that the shepherds will randomly
choose the road.

The question is: should the chief share this information with the
shepherds or not?
If so, (s)he is playing a risky (all or nothing) strategy. If not,
(s)he is
taking a minimax strategy where it is more likely that some of the
flocks will survive.
Is there a way to calibrate the amount of information to be shared?

There are several interesting similarities between this story and the
main issue of the paper.
Is there a way to find a compromise between the general goal and a single
objective? Is it possible to find a prior -- or a strategy -- which is
not so bad
for any of the problems at hand?

Our view is that, if the answer is ``yes'', this prior should not depend
on the particular list of problems. In other words, it would be great
to have just ``one''
overall prior. In this respect, the hierarchical approach seems to be
more promising.

\bibliographystyle{ba}

\begin{thebibliography}{3}

\bibitem[{Datta et~al.(2000)Datta, Mukerjee, Ghosh, and Sweeting}]{Datta00}
Datta, G., Mukerjee, R., Ghosh, M., and Sweeting, T. (2000).
\newblock\enquote{Bayesian Prediction with Approximate Frequentist Validity.}
\newblock\emph{The Annals of Statistics\/}, 28: 1414--1426.
\bid{doi={10.1214/aos/1015957400}, issn={0090-5364}, mr={1805790}}
\bptok{addids}%
\endbibitem

\bibitem[{Kass and Wasserman(1996)}]{kw}
Kass, R. and Wasserman, L. (1996).
\newblock\enquote{The selection of prior distribution via formal rules.}
\newblock\emph{Journal of the American Statistical Association\/}, 91:
1343--1370.
\bid{doi={10.1214/lnms/1215453065}, mr={1478684}}
\bptok{addids}%
\endbibitem

\bibitem[{Scott and Berger(2010)}]{scott}
Scott, J. and Berger, J. (2010).
\newblock\enquote{Bayes and empirical-{B}ayes multiplicity adjustment
in the
variable-selection problem.}
\newblock\emph{The Annals of Statistics\/}, 38: 2587--2619.
\bid{doi={10.1214/10-AOS792}, issn={0090-5364}, mr={2722450}}
\bptok{addids}
\endbibitem

\end{thebibliography}

\end{document}